 \newcommand{\bR}{\mathbb{R}}
 \newcommand{\bP}{\mathbb{P}} \newcommand{\cO}{\mathcal{O}}
 \newcommand{\bZ}{\mathbb{Z}}
 \newcommand{\bC}{\mathbb{C}}
 \DeclareMathOperator{\Ell}{Ell}
 \DeclareMathOperator{\res}{res}
\newcommand{\be}{\begin{equation}}
\newcommand{\ee}{\end{equation}}
\newcommand{\bea}{\begin{eqnarray}}
\newcommand{\eea}{\end{eqnarray}}
\newcommand{\ben}{\begin{eqnarray*}}
\newcommand{\een}{\end{eqnarray*}}
\newcommand{\half}{\frac{1}{2}}
\newtheorem{cor}{Corollary}[section]
 \newtheorem{thm}[cor]{Theorem}
\theoremstyle{remark}
\definecolor{A}{rgb}{.75,1,.75}
\definecolor{green}{rgb}{0,1,0}
\definecolor{yellow}{rgb}{1,1,0}
\definecolor{orange}{rgb}{1,.7,0}
\definecolor{red}{rgb}{1,0,0}
\definecolor{white}{rgb}{1,1,1}
\begin{document}
\title
{On Equivariant Elliptic Genera of Toric Calabi-Yau 3-folds}

\author{Jian Zhou}
\address{Department of Mathematical Sciences\\Tsinghua University\\Beijng, 100084, China}
\email{jzhou@math.tsinghua.edu.cn}

\begin{abstract}
We show that equivariant elliptic genera of toric Calabi-Yau 3-folds
are generalized weak Jacobi forms.
We also introduce a notion of averaged  equivariant elliptic genera
of toric Calabi-Yau 3-folds, and show that they  are ordinary
weak Jacobi forms given by an explicit formula predicted by
 Eguchi and Sugawara.
\end{abstract}

\dedicatory{Dedicated to the memory of Professor S.-S. Chern on the occasion of 
his 104th birthday.}

\maketitle

\section{Introduction}

Geometrically,
by the elliptic genus of a compact complex manifold $Y$ of dimension $n$ we
mean the Euler characteristic:
\be
Z_Y(\tau, z) = \chi(Y, \Ell(T^*Y; \tau, z)),
\ee
where for a vector bundle $E$ on $Y$,
$\Ell(V; \tau, z) \in y^{-n/2} K(Y)[[q, y,y^{-1}]]$ is defined by:
\be
\begin{split}
& \Ell(V; \tau, z) \\
=& q^{-n/2} \bigotimes_{m \geq 1}
(\Lambda_{-yq^{m-1}}(V) \otimes \Lambda_{-y^{-1} q^m}(V^*)
\otimes S_{q^m}(V) \otimes S_{q^m}(V^*)).
\end{split}
\ee
Here $q= e^{2\pi i \tau}$, $y = e^{2\pi iz}$.
When $Y$ is Calabi-Yau,
$Z_Y(\tau, z)$ is a weak Jacobi form of weight $0$ and index $n/2$ (see e.g.
\cite{Kawai-Yamada-Yang, Borisov-Libgober}).
Since there is a unique (up to constant) weak Jacobi form of index $3/2$ given by
\be
\frac{\theta_1(\tau, 2z)}{\theta_1(\tau, z)},
\ee
when $n =3$,
one has
\be \label{eqn:ES}
Z_Y(\tau, z) = \frac{\chi(Y)}{2} \frac{\theta_1(\tau, 2z)}{\theta_1(\tau,z)}.
\ee

Physically,
elliptic genus of a compact Calabi-Yau manifold can be defined as a supersymmtric index
of an $N=2$ superconformal field theory associated to it
(see e.g. \cite{Witten, Eguchi-Ooguri-Taormina-Yang, Kawai-Yamada-Yang}),
thus it can then be expressed in terms of some $N=2$ characters.
Such expressions can often be shown to match with
the geometric definition by computing residues of some elliptic functions
(see \cite{Ma-Zhou, Guo-Zhou1, Guo-Zhou2}).

For a noncompact Calabi-Yau manifold,
there are still physical definition and computations of
the elliptic genera.
Eguchi and Sugawara \cite{Eguchi-Sugawara} conjectured that
\eqref{eqn:ES} also holds for noncompact Calabi-Yau 3-folds.
Unfortunately elliptic genera of noncompact complex manifolds have not been defined
geometrically in general.
To make sense of this conjecture,
one has to first make a suitable definition.

In this paper we will focus on toric Calabi-Yau 3-fold and use the torus
action to  define equivariant elliptic genera.
We will not use the full 3-torus but instead
restrict to a 2-torus that preserves the holomorphic volume form.
We will show that the equivariant elliptic genera
of toric Calabi-Yau 3-folds are generalized weak Jacobi forms
in the sense of \S \ref{sec:Jacobi}.
We will also introduce a notion of balanced toric Calabi-Yau 3-fold.
A typical example is the resolved conifold $\cO(-1) \oplus \cO(-1) \to \bP^1$.
We prove \eqref{eqn:ES} for balanced toric Calabi-Yau 3-folds
by establishing it first for the resolved conifold.
For general toric CY 3-folds,
we introduce a notion of averaged equivariant elliptic genera
that are suitable for the purpose.

It is very interesting to compare the compact case and the noncompact case.
In both cases some kind of localization formula is used.
In the former it is Cauchy's residue formula,
in the latter it is Atiyah-Bott's Lefschetz formula \cite{Atiyah-Bott}.

The rest of the paper is arranged as follows.
In \S 2 we show that the equivariant elliptic genera of
toric Calabi-Yau 3-folds are generalized weak Jacobi forms.
In \S 3 we define balanced toric Calabi-Yau 3-folds and show that
they satisfy \eqref{eqn:ES}.
We also define averaged equivariant elliptic genera
and show that they are given by \eqref{eqn:ES}.
To conclude this paper, we propose a definition of elliptic genera of general noncompact
complex manifolds in \S 4.

\vspace{.1in}

{\em Acknoledgements}.
This research is partially supported by NSFC grant 11171174.
The author thanks Professor Eguchi for bringing his attention to his joint work
with Professor Sugawara about ten years ago.

\section{Equivariant Elliptic Genera of Toric Calabi-Yau 3-Folds}

In this section we first briefly recall the definition of equivariant elliptic genera
of toric Calabi-Yau 3-folds.
(This is based on Atiyah-Bott-Lefschetz formula \cite{Atiyah-Bott}.
See \cite{LLZ} for more examples of computations of equivariant indices
using this formula. )
Next we show that they are generalized weak Jacobi forms.

\subsection{Toric Calabi-Yau 3-folds}

Such spaces can be described by planar diagrams $\Gamma$ satisfying the following conditions.
Each vertex either has three incident edges (it will then be called a trivalent vertex)
or has only one incident edge (it will then be called a univalent vertex).
At each trivalent vertex $v$ of $\Gamma$,
the three incident outgoing edges are in the directions of
three vectors  $w_1^v=(a_1^v, b_1^v)$, $w_2^v= (a_2^v, b_2^v)$
and $w^v_3 = (a_3^v, b_3^v)$ in $\bZ^2$ respectively.
These vectors will be called weight vectors at $v$ satisfies the following conditions:
\begin{itemize}
\item[(a)] (Balancing at trivalent vertices) For all $v \in V(\Gamma)$,
where $V(\Gamma)$ denotes the set of trivalent vertices of $\Gamma$,
$w_1^v + w_2^v + w^v_3 = 0$.
\item[(b)] (Balancing along internal edges) For each internal edge $e$ of $\Gamma$
joining two trivalent vertices $v_1$ and $v_2$,
we have $w^{v_1}_{v_1, e} + w^{v_2}_{v_2, e} = 0$,
where if $e$ is an edge incident at a trivalent vertex $v$,
$w^v_{v, e}$ denotes the weight vector at $v$ along the edge $e$.
\end{itemize}
We understand the weight vectors as the weights of the torus action
on the cotangent spaces of the fixed points.
The following are two examples:
They are the total spaces of $\cO(-1) \oplus \cO(-1) \to \bP^1$
and $\cO(-3) \to \bP^2$ respectively.
$$
\xy
(0,10); (0,0), **@{-}; (-7, -7), **@{-};
(0,0); (40,0), **@{-}; (47,7), **@{-}; (40,0); (40, -10), **@{-};
(-7, 8)*+{(0,1)};
(-15, -6)*+{(-1,-1)}; (9,2)*+{(1,0)}; (30,2)*+{(-1,0)};
(11,-14)*+{\cO_{\bP^1}(-1)\oplus \cO_{\bP^1}(-1)};
(47, -8)*+{(0,-1)};  (54, 8)*+{(1,1)};
\endxy
$$

$$
\xy
(-7,-7); (0,0), **@{-}; (25, 0), **@{-}; (0, 25), **@{-};
(-5, 35), **@{-};
(0,25); (0,0), **@{-}; (25, 0), **@{-}; (35, -5), **@{-};
(-15, -7)*+{(-1,-1)}; (4,-3)*+{(1,0)}; (20,-3)*+{(-1,0)}; (37, -7)*+{(2,-1)};
(-5,4)*+{(0,1)}; (27,5)*+{(-1,1)};
(-6,20)*+{(0,-1)}; (13,20)*+{(1,-1)};
(-10,28)*+{(-1,2)};
(11,-14)*+{\cO_{\bP^3}(-3)};
\endxy
$$

\subsection{Equivariant elliptic genera of toric Calabi-Yau 3-folds}

We will identify the vector $w_j^v = (a^v_j, b^v_j)$ with $a^v_j t_1 + b^v_j t_2$,
also denoted by $w_j^v$.
The equivariant elliptic genus of a toric Calabi-Yau 3-fold $Y$ associated to
a toric diagram $\Gamma$ is given by
\be
Z_Y(\tau, z; t_1, t_2)
= \sum_{v \in V(\Gamma)}
\prod_{j=1}^3 \frac{\theta_1(\tau, z+w^v_j)}{\theta_1(\tau, w^v_j)},
\ee
where $\theta_1$ is the theta function defined by:
\be
\begin{split}
\theta_1(\tau, z) = & i
\sum_{n=-\infty}^\infty
(-1)^n q^{(n-1/2)^2/2}
y^{n-1/2} \\
= & i q^{1/8} y^{-1/2} \prod_{m=1}^\infty (1 - q^m)(1 - y q^{m-1})(1 - y^{-1}q^m).
\end{split}
\ee
For example,
\be \label{eqn:Z-conifold}
\begin{split}
& Z_{\cO_{\bP^1}(-1)^{\oplus 2}}(\tau, z; t_1, t_2) \\
= & \frac{\theta_1(\tau, z+t_1)}{\theta_1(\tau, t_1)}
\frac{\theta_1(\tau, z+t_2)}{\theta_1(\tau, t_2)} \cdot
\frac{\theta_1(\tau, z-t_1-t_2)}{\theta_1(\tau, -t_1-t_2)} \\
+ & \frac{\theta_1(\tau, z-t_1)}{\theta_1(\tau, -t_1)} \cdot
\frac{\theta_1(\tau, z-t_2)}{\theta_1(\tau, -t_2)} \cdot
\frac{\theta_1(\tau, z+t_1+t_2)}{\theta_1(\tau, t_1+t_2)}.
\end{split}
\ee

\be
\begin{split}
& Z_{\cO_{\bP^2}(-3)}(\tau, z; t_1, t_2) \\
= & \frac{\theta_1(\tau, z+t_1)}{\theta_1(\tau, t_1)}
\frac{\theta_1(\tau, z+t_2)}{\theta_1(\tau, t_2)} \cdot
\frac{\theta_1(\tau, z-t_1-t_2)}{\theta_1(\tau, -t_1-t_2)} \\
+ & \frac{\theta_1(\tau, z-t_1)}{\theta_1(\tau, -t_1)} \cdot
\frac{\theta_1(\tau, z-t_1 + t_2)}{\theta_1(\tau, -t_1+ t_2)} \cdot
\frac{\theta_1(\tau, z+2t_1-t_2)}{\theta_1(\tau, 2t_1-t_2)} \\
+ & \frac{\theta_1(\tau, z-t_2)}{\theta_1(\tau, -t_2)} \cdot
\frac{\theta_1(\tau, z-t_2 + t_1)}{\theta_1(\tau, -t_2+ t_1)} \cdot
\frac{\theta_1(\tau, z+2t_2-t_1)}{\theta_1(\tau, 2t_2-t_1)}.
\end{split}
\ee

\subsection{Modular transformation properties of equivariant elliptic genera of toric Calabi-Yau 3-folds}

Recall that the theta-function $\theta_1$ has the following well-known
modular transformation properties:
\begin{align}
\theta_1(\tau, z+1) & = - \theta_1(\tau, z), &
\theta_1(\tau, z+\tau) & = - e^{-2\pi i z - \pi i \tau} \theta_1(\tau, z), \\
\theta_1(\tau+1, z) & = e^{\pi i/4} \theta_1(\tau, z), &
\theta_1(- \frac{1}{\tau}, \frac{z}{\tau})
& = - i \sqrt{\frac{\tau}{i}} e^{\frac{\pi i z^2}{\tau}} \theta_1(\tau, z).
\end{align}
From these it is straightforward to deduce the following transformation formulas:
\bea
&& Z_Y (\tau, z+1; t_1, t_2) = (-1)^3 \cdot Z_Y(\tau, z; t_1, t_2), \\
&& Z_Y(\tau, z+\tau; t_1, t_2) = (-e^{-2\pi iz - \pi i \tau})^3 \cdot Z_Y(\tau, z; t_1, t_2), \\
&& Z_Y(\tau, z; t_1+1, t_2) =  Z_Y(\tau, z, \tau; t_1, t_2), \\
&& Z_Y(\tau, z; t_1+\tau, t_2) =  Z_Y(\tau, z; t_1, t_2), \\
&& Z_Y(\tau, z; t_1, t_2+1) =  Z_Y(\tau, z; t_1, t_2), \\
&& Z_Y(\tau, z; t_1, t_2+\tau) =    Z_Y(\tau, z; t_1, t_2), \\
&& Z_Y(\tau+1, z; t_1, t_2) = Z_Y(\tau, z; t_1, t_2), \\
&& Z_Y(-\frac{1}{\tau}, \frac{z}{\tau}; \frac{t_1}{\tau}, \frac{t_2}{\tau})
= e^{3 \cdot \frac{\pi i z^2}{\tau}} Z_Y(\tau, z; t_1, t_2).
\eea

\subsection{Generalized weak Jacobi forms} \label{sec:Jacobi}

We make the following definition.
Suppose that $\phi: H \times \bC^{n+1} \to \bC$
is a holomorphic function,
where $H$ is the upper half-plane,
such that
\bea
&& \phi(\tau, z+1; t_1, \dots, t_n) = (-1)^{2r} \cdot \phi(\tau, z; t_1, \dots, t_n), \\
&& \phi(\tau, z+\tau; t_1, \dots, t_n)
= (-e^{-2\pi iz - \pi \tau})^{2r} \cdot \phi(\tau, z; t_1, \dots, t_n), \\
&& \phi(\tau, z; t_1, \dots, t_j+1, \dots, t_n)
=  \phi(\tau, z; t_1, \dots, t_n), j=1, \dots, n, \\
&& \phi(\tau, z; t_1, \dots, t_j+\tau, \dots, t_n )
=  \phi(\tau, z; t_1, \dots, t_n), j=1, \dots, n, \\
&& \phi(\tau+1, z; t_1, \dots, t_n) = \phi(\tau, z; t_1, \dots, t_n), \\
&& \phi(-\frac{1}{\tau}, \frac{z}{\tau}; \frac{t_1}{\tau}, \dots, \frac{t_n}{\tau})
= e^{2r \cdot \frac{\pi z^2}{\tau}} \phi(\tau, z; t_1, \dots, t_n).
\eea
Furthermore,
$\phi$ is assumed to have a Fourier expansion with nonnegative powers of $q$.
Then we say $\phi$ is a generalized weak Jacobi form of weight $0$ and index $r$,
with extra variables $t_1, \dots, t_n$.
With this definition,
we then have

\begin{thm}
The equivariant elliptic genus of a toric Calabi-Yau 3-fold
is a generalized weak Jacobi form of weight $0$ and index $3/2$,
with extra variables $t_1$ and $t_2$.
\end{thm}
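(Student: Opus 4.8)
The plan is to obtain the statement almost directly from the transformation identities for $Z_Y$ already recorded above, supplemented by a short analysis of the $q$-expansion of a single summand. First I would match the eight displayed transformation formulas for $Z_Y$ against the six defining properties of a generalized weak Jacobi form with $n=2$ extra variables and $2r=3$, i.e. $r=3/2$: the identities $Z_Y(\tau,z+1;t_1,t_2)=(-1)^3 Z_Y$ and $Z_Y(\tau,z+\tau;t_1,t_2)=(-e^{-2\pi iz-\pi i\tau})^3 Z_Y$ are the elliptic transformations in weight $0$ and index $3/2$; the four identities for $t_j\mapsto t_j+1$ and $t_j\mapsto t_j+\tau$ give the required invariance in the extra variables; $Z_Y(\tau+1,z;t_1,t_2)=Z_Y(\tau,z;t_1,t_2)$ is invariance under $T$; and $Z_Y(-1/\tau, z/\tau; t_1/\tau, t_2/\tau)=e^{3\pi iz^2/\tau}Z_Y$ is the $S$-transformation with no accompanying power of $\tau$, which is exactly weight $0$ and index $3/2$. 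For completeness I would indicate how the excerpt's derivation goes: in each summand $\prod_{j=1}^3\theta_1(\tau,z+w_j^v)/\theta_1(\tau,w_j^v)$ the quasi-periodicity and modular formulas for $\theta_1$ are applied to numerator and denominator separately; the multiplier $e^{\pi i/4}$ under $\tau\mapsto\tau+1$ and the half-integral-weight factor $-i\sqrt{\tau/i}$ under $S$ cancel between numerator and denominator, while the various exponential prefactors produced at the three edges of a vertex (linear in $z$ for the elliptic and $t_j$-shifts, of the form $e^{\pi i(z^2+2zw_j^v)/\tau}$ for $S$) combine and collapse by the balancing condition (a), $w_1^v+w_2^v+w_3^v=0$. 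Condition (b) is not needed here.

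The one property not yet addressed is that $Z_Y$ admits a Fourier expansion in $q$ with nonnegative powers of $q$. For this I would use the product form of $\theta_1$. In a single ratio the prefactor $iq^{1/8}$ cancels, leaving
\[
\frac{\theta_1(\tau,z+w)}{\theta_1(\tau,w)}
= y^{-1/2}\prod_{m\geq 1}
\frac{(1-yxq^{m-1})(1-y^{-1}x^{-1}q^{m})}{(1-xq^{m-1})(1-x^{-1}q^{m})},
\qquad x=e^{2\pi i w}.
\]
Expanding $(1-x^{-1}q^{m})^{-1}=\sum_{k\geq0}x^{-mk}q^{mk}$ and treating $(1-x)^{-1}$ as a rational function of $x$ (legitimate for a nonzero weight vector $w$), each factor is a power series in $q$ with nonnegative exponents and coefficients that are Laurent polynomials in $y^{1/2}$ and rational in $e^{2\pi i t_1},e^{2\pi i t_2}$; hence so are the product over $j=1,2,3$ and the finite sum over $v\in V(\Gamma)$. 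This gives the desired $q$-expansion, and it also shows $Z_Y$ is holomorphic wherever none of the $w_j^v$ lies on the theta-divisor $\bZ+\bZ\tau$ — since every weight vector $(a_j^v,b_j^v)$ is nonzero, this is the complement of a union of divisors in $(t_1,t_2)$.

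I do not anticipate a serious obstacle: the elliptic and modular transformations are a direct repackaging of the identities already established, and the $q$-nonnegativity is immediate from the product formula once the $q^{1/8}$ factors cancel. The only genuine point of care is the domain. Unlike the compact case, where Cauchy's residue theorem forces the poles of the individual fixed-point contributions to cancel in the sum, here no such cancellation occurs and $Z_Y$ really does have poles in the extra variables $t_1,t_2$; the theorem should therefore be read with $(t_1,t_2)$ restricted to the complement of the theta-vanishing locus, equivalently, $Z_Y$ is meromorphic of Jacobi type in $t_1,t_2$. I would add a brief remark making this explicit.
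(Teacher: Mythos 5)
Your proposal is correct and follows essentially the same route as the paper: the theorem there is deduced directly from the displayed transformation formulas for $Z_Y$ (which come from the $\theta_1$ transformation laws together with the vertex balancing condition $w_1^v+w_2^v+w_3^v=0$), exactly as you outline. Your additional verification of the nonnegative $q$-expansion via the product formula, and your remark that $Z_Y$ is only meromorphic in the extra variables $t_1,t_2$ (poles along the theta-vanishing locus), fill in details the paper leaves implicit and are accurate.
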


\section{Averaged Equivariant Elliptic Genera of Toric Calabi-Yau 3-Folds}

In this section we present some examples of toric Calabi-Yau 3-folds
called balanced toric CY 3-filds
whose equivariant elliptic genera are independent of $t_1$ and $t_2$
and given by the prediction of Eguchi-Sugawara.
For general toric CY 3-fold we introduce a notion of
averaged equivariant elliptic genera that have the same property.

\subsection{The equivariant elliptic genus of the resolved conifold}

Our main result is:

\begin{thm} \label{thm:Conifold}
The equivariant elliptic genus of the resolved conifold
is independent of $t_1$ and $t_2$,
and it is given by:
\be
Z_{\cO_{\bP^1}(-1)^{\oplus 2}}(\tau, z)
= \frac{\theta_1(\tau, 2z)}{\theta_1(\tau, z)}.
\ee
\end{thm}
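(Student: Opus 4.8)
The plan is to reduce the statement to a single identity among theta functions in the variable $z$ and to prove that identity by a Liouville-type argument. Set $a = t_1$, $b = t_2$, $c = -t_1 - t_2$, so that $a+b+c = 0$. Using that $\theta_1(\tau,\cdot)$ is odd, formula \eqref{eqn:Z-conifold} becomes
\be
Z_{\cO_{\bP^1}(-1)^{\oplus 2}}(\tau, z; t_1, t_2)
= \frac{N(\tau, z)}{\theta_1(\tau, a)\theta_1(\tau, b)\theta_1(\tau, c)},
\ee
where
\be
\begin{split}
N(\tau, z) &= \theta_1(\tau, z+a)\theta_1(\tau, z+b)\theta_1(\tau, z+c) \\
&\quad - \theta_1(\tau, z-a)\theta_1(\tau, z-b)\theta_1(\tau, z-c).
\end{split}
\ee
So it suffices to prove the theta identity
\be \label{eqn:conifold-theta-id}
N(\tau, z)\,\theta_1(\tau, z) = \theta_1(\tau, 2z)\,\theta_1(\tau, a)\theta_1(\tau, b)\theta_1(\tau, c),
\qquad a+b+c = 0,
\ee
since dividing it back by $\theta_1(\tau,a)\theta_1(\tau,b)\theta_1(\tau,c)$ exhibits $Z_{\cO_{\bP^1}(-1)^{\oplus 2}}$ as $\theta_1(\tau,2z)/\theta_1(\tau,z)$, with no $t_1,t_2$ remaining.

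To prove \eqref{eqn:conifold-theta-id} I would fix $\tau$ and generic $a,b,c$ with $a+b+c=0$ and regard both sides as holomorphic functions of $z$. From the quasi-periodicity of $\theta_1$ recalled above one checks: (i) $N(\tau,z)$ is even in $z$; (ii) $N(\tau, z+1) = -N(\tau,z)$; (iii) $N(\tau, z+\tau) = -e^{-6\pi i z - 3\pi i \tau}N(\tau, z)$, the condition $a+b+c=0$ being used to eliminate an extra $z$-independent phase. A short computation then shows that the two sides of \eqref{eqn:conifold-theta-id} acquire the same automorphy factors under $z \mapsto z+1$ and $z \mapsto z+\tau$, so the ratio
\be
R(z) = \frac{N(\tau, z)\,\theta_1(\tau, z)}{\theta_1(\tau, 2z)}
\ee
is an elliptic function of $z$. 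It remains to see that $R$ is holomorphic: its only possible poles lie at the zeros of $\theta_1(\tau, 2z)$, i.e. the half-lattice points. At a point of $\bZ + \bZ\tau$ the factor $\theta_1(\tau, z)$ in the numerator has a compensating simple zero; at each of the three nonzero two-torsion points $z_0 \in \{\tfrac12, \tfrac{\tau}{2}, \tfrac{1+\tau}{2}\}$ one combines evenness with (ii)--(iii) to force $N(\tau, z_0) = 0$ (for instance $N(\tau,\tfrac12) = N(\tau,-\tfrac12) = -N(\tau,\tfrac12)$ by (i) and (ii)), so the zero of $\theta_1(\tau, 2z)$ there is cancelled as well. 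Hence $R$ is a holomorphic elliptic function, therefore constant.

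Finally I would identify the constant by letting $z \to 0$: since $\theta_1(\tau, 0) = 0$ with $\theta_1(\tau, 2z)/\theta_1(\tau, z) \to 2$, while $N(\tau, 0) = 2\,\theta_1(\tau, a)\theta_1(\tau, b)\theta_1(\tau, c)$, one gets $R \equiv \theta_1(\tau, a)\theta_1(\tau, b)\theta_1(\tau, c)$, which is exactly \eqref{eqn:conifold-theta-id}. Substituting back yields $Z_{\cO_{\bP^1}(-1)^{\oplus 2}}(\tau, z; t_1, t_2) = \theta_1(\tau, 2z)/\theta_1(\tau, z)$ on the locus where the equivariant genus is defined, in particular independent of $t_1$ and $t_2$. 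I expect the main obstacle to be the regularity step: one must use the precise automorphy factor of $N$ in (iii) together with evenness to pin down the zeros of $N$, since merely counting zeros does not by itself locate them. (Alternatively, \eqref{eqn:conifold-theta-id} can be recognized as a special case of Riemann's theta relations; the argument above has the advantage of being self-contained.)
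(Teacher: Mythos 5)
Your proof is correct, but it runs the Liouville argument in a different variable than the paper does, and in the reverse logical order. The paper works with the product form $A(q,y,u)$ of the genus, checks by explicit residue computations that the poles of $Z$ in $t_1$ (at $t_1\in\bZ\tau+\bZ$ and $t_1\in -t_2+\bZ\tau+\bZ$) have cancelling residues between the two fixed-point contributions, concludes that $Z$ is holomorphic and doubly periodic, hence constant, in $t_1$ (and in $t_2$ by symmetry), and then identifies the resulting weak Jacobi form of weight $0$ and index $3/2$ using the fact that the space of such forms is one-dimensional and spanned by $\theta_1(\tau,2z)/\theta_1(\tau,z)$; the theta identity \eqref{eqn:Theta-Identity} is then deduced as a corollary of the theorem. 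You instead prove \eqref{eqn:Theta-Identity} directly, by showing that $R(z)=N(\tau,z)\theta_1(\tau,z)/\theta_1(\tau,2z)$ is elliptic in $z$, holomorphic at the half-lattice points (your parity/quasi-periodicity argument forcing $N$ to vanish at the three nonzero two-torsion points is correct, as is the check that $a+b+c=0$ kills the extra phase in (iii)), hence constant, and then evaluate the constant at $z\to 0$; the theorem follows by dividing by $\theta_1(\tau,a)\theta_1(\tau,b)\theta_1(\tau,c)$. Your route buys self-containedness: it needs neither the infinite-product residue calculus nor the one-dimensionality of the space of index-$3/2$ weak Jacobi forms (cited in the paper from Borisov--Libgober), and it fixes the overall constant explicitly, which the paper's dimension-count argument leaves to an unstated normalization. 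The paper's route buys conceptual generality: the residue cancellation in the equivariant parameters is the mechanism that also underlies the generalized-Jacobi-form statement of \S 2 and the treatment of more general toric diagrams, whereas your argument is special to the single theta identity governing the conifold (and, via the balancing trick, the balanced and averaged cases).
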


This is in agreement with the prediction by Eguchi-Sugawara \cite[(3.48)]{Eguchi-Sugawara}:
\be
Z(\tau, z) = \frac{\chi}{2} \frac{\theta_1(\tau, 2z)}{\theta_1(\tau, z)},
\ee
since $\chi(\cO_{\bP^1}(-1)^{\oplus 2}) = 2$.
There is a difference of a factor $1/2$ with (3.39) of the same paper.

\begin{proof}
Recall
\be
\begin{split}
& Z_{\cO_{\bP^1}(-1)^{\oplus 2}}(\tau, z, t_1, t_2) \\
= & y^{-3/2} \cdot A(q, y, u) \cdot A(q, y, v) \cdot A(q, y, (uv)^{-1}) \\
+ & y^{-3/2} \cdot A(q, y, u^{-1}) \cdot A(q, y, v^{-1}) \cdot A(q, y, uv),
\end{split}
\ee
where
\be
A(q,y,u) = \prod_{n=1}^\infty \frac{(1-yuq^{n-1})(1-(yu)^{-1}q^n)}{(1-uq^{n-1})(1-u^{-1}q^n)},
\ee
$q=e^{2\pi i \tau}$, $y = e^{2\pi i z}$, $u = e^{2\pi i t_1}$ and $v = e^{2\pi i t_2}$.
As meromorphic functions in $u$,
$A(q, y, u)$ and $A(q, y, u^{-1})$ have first order poles at $q^n$, $n \in \bZ$,
$A(q, y, u)$ and $A(q, y, u^{-1})$ have first order poles at $v^{-1} q^n$, $n \in \bZ$.
It follows that as a meromorphic function in $u$,
$Z_{\cO_{\bP^1}(-1)^{\oplus 2}}(q,y, u, v)$ has only simple poles at
$t_1=m\tau + n$ and $t_1= -t_2+ m\tau + n$ ($m, n \in \bZ$).
By some straightforward calculations,
one gets for all $m \in \bZ$,
\ben
&& \res_{u=q^m} A(q,z, u) du = - q^m y^{-m} \cdot B(q, y), \\
&& \res_{u=q^m} A(q, y, u^{-1}) du = q^m y^{m} \cdot B(q, y), \\
&& A(q,y,uv)|_{u = q^m} = y^{-m} A(q, y, v), \\
&& A(q,y,(uv)^{-1})|_{u = q^m} = y^m A(q, y, v^{-1}),
\een
\ben
A(q, y, u)|_{u =v^{-1}q^m} & = & y^{-m} A(q, y, v^{-1}), \\
A(q,y,u^{-1})|_{u=v^{-1} q^m}   & = & y^m A(q, y, v), \\
\res_{u=v^{-1}q^m}A(q, y, uv) du & = & - v^{-1}q^m y^{-m} B(q, y) \\
\res_{u=v^{-1}q^m}A(q, y, (uv)^{-1})  du& = & v^{-1}q^m y^m B(q, y),
\een
where
\be
B(q,y) = \prod_{n=1}^\infty \frac{(1-yq^{n-1})(1-y^{-1}q^n)}{(1-q^n)^2}.
\ee
From these one can easily deduce that
\bea
&& \res_{t_1= m\tau + n} (Z_{\cO_{\bP^1}(-1)^{\oplus 2}}(\tau, z, t_1, t_2) dt_1)= 0, \\
&& \res_{t_1= -t_2 + m\tau + n} (Z_{\cO_{\bP^1}(-1)^{\oplus 2}}(\tau, z, t_1, t_2) dt_2)= 0,
\eea
for all $m, n \in \bZ$.
It follows that $Z_{\cO_{\bP^1}(-1)^{\oplus 2}}(\tau, z, t_1, t_2)$ is holomorphic
in $t_1$.
Since it is double periodic with periods $1$ and $\tau$,
it is independent of $t_1$.
So is it in $t_2$ by the obvious symmetry between $t_1$ and $t_2$.
Therefore,
$Z_{\cO_{\bP^1}(-1)^{\oplus 2}}(\tau, z, t_1, t_2)$
is a weak Jacobi form of weight $0$ and index $3/2$.
The proof is completed by the fact that \cite{Borisov-Libgober} the space of such forms is one-dimensional and is spanned
by $\frac{\theta_1(\tau, 2z)}{\theta_1(\tau, z)}$.
\end{proof}

By this Theorem we obtain the following  identity for theta function:
\be \label{eqn:Theta-Identity}
\begin{split}
& \frac{\theta_1(\tau, z+t_1)}{\theta_1(\tau, t_1)}
\frac{\theta_1(\tau, z+t_2)}{\theta_1(\tau, t_2)} \cdot
\frac{\theta_1(\tau, z-t_1-t_2)}{\theta_1(\tau, -t_1-t_2)} \\
+ & \frac{\theta_1(\tau, z-t_1)}{\theta_1(\tau, -t_1)} \cdot
\frac{\theta_1(\tau, z-t_2)}{\theta_1(\tau, -t_2)} \cdot
\frac{\theta_1(\tau, z+t_1+t_2)}{\theta_1(\tau, t_1+t_2)} \\
= & \frac{\theta_1(\tau, 2z)}{\theta_1(\tau, z)}.
\end{split}
\ee

\subsection{Balanced toric Calabi-Yau 3-folds}

Suppose that $Y$ is a toric Calabi-Yau 3-fold.
If for any vertex $v$
of its toric graph $\Gamma$,
there is another vertex $v'$ of $\Gamma$ such that for suitable ordering of
the weights,
\be
w_j^v + w_j^{v'} = 0, \;\;\; j=1, 2,3,
\ee
then we say $Y$ is a balanced toric 3-fold.
We will refer to $v'$ as the balancing vertex of $v$.

There are many examples of balanced toric Claabi-Yau 3-folds.
For example,
the canonical line bundles of $\bP^1 \times \bP^1$,
$\bP^1 \times \bP^1$ blown up at two points
$([1:0], [1:0])$ and $([0:1], [0:1])$,
and $\bP^1 \times \bP^1$ blown up at four points
$([1:0], [1:0])$, $([1:0], [0:1])$,$([0:1], [1:0])$, and $([0:1], [0:1])$.
Indeed,
any central symmetric toric diagram corresponds to a balanced toric CY 3-fold.

\subsection{Equivariant elliptic genera of balanced toric Calabi-Yau 3-folds}

\begin{thm} \label{thm:Balanced}
Let $Y$ be a balanced toric CY 3-fold.
Then its equivariant elliptic genus is independent of $t_1$ and $t_2$,
and it is given by
\be
Z_Y(\tau, z;t_1,t_2) = \frac{\chi(Y)}{2} \frac{\theta_1(\tau, 2z)}{\theta_1(\tau, z)}.
\ee
\end{thm}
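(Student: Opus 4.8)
The plan is to deduce the theorem directly from the theta-function identity \eqref{eqn:Theta-Identity}, by reducing each balancing pair of vertices to one copy of the resolved conifold (Theorem~\ref{thm:Conifold}). Recall that
\[
Z_Y(\tau,z;t_1,t_2) = \sum_{v\in V(\Gamma)} \prod_{j=1}^3 \frac{\theta_1(\tau, z+w_j^v)}{\theta_1(\tau, w_j^v)},
\]
and that the balancing condition at a trivalent vertex gives $w_1^v+w_2^v+w_3^v=0$. Hence, putting $t_1=w_1^v$ and $t_2=w_2^v$ so that $w_3^v=-t_1-t_2$, the summand attached to $v$ is precisely the first of the two triple products on the left-hand side of \eqref{eqn:Theta-Identity}. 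If $v'$ is the balancing vertex of $v$, taken with the ordering for which $w_j^{v'}=-w_j^v$ for $j=1,2,3$, then the summand attached to $v'$ is precisely the second triple product on the left-hand side of \eqref{eqn:Theta-Identity} for the same $t_1,t_2$. Therefore the combined contribution of the pair $\{v,v'\}$ equals $\theta_1(\tau,2z)/\theta_1(\tau,z)$, which is manifestly independent of $t_1$ and $t_2$.

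Next I would organize the vertex sum as a sum over balancing pairs. The relation ``$v$ balances $v'$'' is symmetric, and since no weight vector vanishes it has no fixed point; using that distinct trivalent vertices of $\Gamma$ carry distinct weight systems, $v\mapsto v'$ is a fixed-point-free involution of $V(\Gamma)$. Consequently $|V(\Gamma)|$ is even and $V(\Gamma)$ decomposes into $|V(\Gamma)|/2$ balancing pairs. Summing the identity of the previous paragraph over these pairs gives
\[
Z_Y(\tau,z;t_1,t_2) = \frac{|V(\Gamma)|}{2}\cdot\frac{\theta_1(\tau,2z)}{\theta_1(\tau,z)},
\]
so $Z_Y$ is independent of $t_1,t_2$ and is a scalar multiple of $\theta_1(\tau,2z)/\theta_1(\tau,z)$.

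Finally I would identify the combinatorial constant with the topological one, i.e. $\chi(Y)=|V(\Gamma)|$. The trivalent vertices of $\Gamma$ are exactly the fixed points of the torus action on $Y$, and $Y$ retracts onto the compact toric subvariety carrying those fixed points, which admits an algebraic cell decomposition with one cell per fixed point; hence $\chi(Y)=|V(\Gamma)|$ (as one checks in the examples: $\chi=2$ for $\cO_{\bP^1}(-1)^{\oplus 2}$ and $\chi=3$ for $\cO_{\bP^2}(-3)$). This yields the stated formula with coefficient $\chi(Y)/2$. I expect the only delicate point to be the bookkeeping of the pairing: verifying that a consistent ordering of the weights can be chosen pair-by-pair so that every summand lands exactly in the normal form of \eqref{eqn:Theta-Identity}, and that the balancing map is genuinely a well-defined fixed-point-free involution. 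The analytic content is entirely carried by the already-established identity \eqref{eqn:Theta-Identity}, and the Euler-characteristic count is standard toric geometry.
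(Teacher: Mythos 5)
Your argument is essentially the paper's own proof: pair each trivalent vertex with its balancing vertex, apply the identity \eqref{eqn:Theta-Identity} with $(t_1,t_2)$ replaced by $(w_1^v,w_2^v)$ so each pair contributes $\theta_1(\tau,2z)/\theta_1(\tau,z)$, and use that $\chi(Y)$ equals the number of trivalent vertices (torus fixed points); the paper merely writes the pairing as the average $\tfrac12\sum_{v}\bigl(\text{term of }v+\text{term of }v'\bigr)$ over all vertices instead of a sum over pairs. The only difference is that you make explicit the tacit assumption that $v\mapsto v'$ is a fixed-point-free involution, which the paper leaves implicit in the meaning of ``balanced''.
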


\begin{proof}
Let $\Gamma$ be  the toric diagram of $Y$.
For any vertex $v \in V(\Gamma)$,
let $v'$ be its balancing vertex.
Then by \eqref{eqn:Theta-Identity} we have
\ben
Z_Y(\tau, z; t_1, t_2)
& = & \half \sum_{v\in\Gamma}
\biggl( \prod_{j=1}^3 \frac{\theta_1(\tau, z+ w^v_j)}{\theta_1(\tau, w^v_j)}
+ \prod_{j=1}^3 \frac{\theta_1(\tau, z+ w^{v'}_j)}{\theta_1(\tau, w^{v'}_j)} \biggr) \\
& = & \half \sum_{v\in\Gamma}
\biggl( \prod_{j=1}^3 \frac{\theta_1(\tau, z+ w^v_j)}{\theta_1(\tau, w^v_j)}
+ \prod_{j=1}^3 \frac{\theta_1(\tau, z- w^{v}_j)}{\theta_1(\tau, -w^{v}_j)} \biggr) \\
& = & \half \sum_{v\in\Gamma}\frac{\theta_1(\tau, 2z)}{\theta_1(\tau, z)}\\
& = & \frac{\chi(Y)}{2} \frac{\theta_1(\tau, 2z)}{\theta_1(\tau, z)}.
\een
Here in the last equality we have used the fact that $\chi(Y)$ equals
the number of torus fixed points.
\end{proof}

\subsection{Averaged equivariant elliptic genera of toric CY 3-fold}

Inspired by the result of last section,
we introduce a notion of the averaged equivariant elliptic genus of
a toric CY 3-fold:
\be
Z^{av}_Y(\tau, z; t_1, t_2)
= \half (Z_Y(\tau, z; t_1, t_2) + Z_Y(\tau, z; -t_1, -t_2)).
\ee
By almost the same proof of Theorem \ref{thm:Balanced} one gets:

\begin{thm}
Let $Y$ be a toric 3-fold.
Then its averaged equivariant elliptic genus is independent of $t_1$ and $t_2$,
and it is given by
\be
Z^{av}_Y(\tau, z;t_1,t_2)
= \frac{\chi(Y)}{2} \frac{\theta_1(\tau, 2z)}{\theta_1(\tau, z)}.
\ee
\end{thm}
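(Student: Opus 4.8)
The plan is to mimic the proof of Theorem \ref{thm:Balanced}, replacing the use of the balancing vertex $v'$ by the formal sign-reversal $t_j \mapsto -t_j$. Observe that under $(t_1, t_2) \mapsto (-t_1, -t_2)$, each weight vector $w^v_j = a^v_j t_1 + b^v_j t_2$ is sent to $-w^v_j$, so that
\[
Z_Y(\tau, z; -t_1, -t_2) = \sum_{v \in V(\Gamma)} \prod_{j=1}^3 \frac{\theta_1(\tau, z - w^v_j)}{\theta_1(\tau, -w^v_j)}.
\]
Therefore, averaging and applying the theta-function identity \eqref{eqn:Theta-Identity} termwise to each vertex $v$, whose three weights $w^v_1, w^v_2, w^v_3$ sum to zero by condition (a) and hence can be written as $t_1', t_2', -t_1'-t_2'$ for suitable linear forms, we obtain
\[
Z^{av}_Y(\tau, z; t_1, t_2) = \sum_{v \in V(\Gamma)} \half \left( \prod_{j=1}^3 \frac{\theta_1(\tau, z + w^v_j)}{\theta_1(\tau, w^v_j)} + \prod_{j=1}^3 \frac{\theta_1(\tau, z - w^v_j)}{\theta_1(\tau, -w^v_j)} \right) = \sum_{v \in V(\Gamma)} \frac{\theta_1(\tau, 2z)}{\theta_1(\tau, z)}.
\]
Finally, since $\chi(Y)$ equals the number of torus fixed points, which is $|V(\Gamma)|$, the right-hand side is $\frac{\chi(Y)}{2} \frac{\theta_1(\tau, 2z)}{\theta_1(\tau, z)}$; and as this expression no longer involves $t_1, t_2$, the averaged elliptic genus is manifestly independent of $t_1$ and $t_2$.

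The one point that requires a moment's care, and which I expect to be the main (though minor) obstacle, is the application of \eqref{eqn:Theta-Identity} at each vertex: that identity was derived with independent free parameters $t_1, t_2$, whereas at a given vertex the three weights are specific integer-linear combinations of the global $t_1, t_2$. However, \eqref{eqn:Theta-Identity} is an identity of meromorphic functions valid for generic arguments with the single relation that the three upper arguments $t_1, t_2, -t_1-t_2$ sum to zero, so it may be specialized freely; the balancing condition (a) guarantees exactly this relation holds for $w^v_1, w^v_2, w^v_3$. One should also note that the substitution $t_j \mapsto -t_j$ is a legitimate operation on the meromorphic function $Z_Y$, and that $\chi(Y)$ indeed counts the trivalent vertices (the compact torus fixed points) of $\Gamma$, as used already in the proof of Theorem \ref{thm:Balanced}.

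I will therefore write the proof in three short steps: (i) record the effect of $(t_1,t_2)\mapsto(-t_1,-t_2)$ on each summand; (ii) apply \eqref{eqn:Theta-Identity} vertex by vertex to collapse the averaged sum; (iii) identify the number of vertices with $\chi(Y)$ and conclude. No genuinely new idea beyond Theorem \ref{thm:Balanced} is needed — this is the "balanced" argument with the external involution $t \mapsto -t$ substituting for the internal one furnished by balancing vertices.
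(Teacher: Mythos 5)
Your proposal is correct and is essentially the argument the paper intends: the paper proves this result ``by almost the same proof of Theorem \ref{thm:Balanced}'', namely pairing each vertex contribution with its image under $(t_1,t_2)\mapsto(-t_1,-t_2)$ and applying the identity \eqref{eqn:Theta-Identity} with the weights $w^v_1, w^v_2, w^v_3$ (which sum to zero) in place of $t_1, t_2, -t_1-t_2$, then counting vertices by $\chi(Y)$. Your remark that \eqref{eqn:Theta-Identity} may be specialized to the integer-linear forms $w^v_j$ as an identity of meromorphic functions is exactly the right justification, and no further ideas are needed.
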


This establishes \cite[(3.48)]{Eguchi-Sugawara} for general toric CY 3-folds.
By comparing with Example 3 in Eguchi-Sugawara \cite{Eguchi-Sugawara},
our averaging of the equivariant elliptic genus seems to play the role
of their assumption of the ``charge conjugation symmetry".

\section{A Definition of Elliptic Genera of Noncompact Complex Manifolds}

In this section we propose a definition of elliptic genus of
a noncompact complex manifold
with a compactfication to a compact smooth manifold by adding a
divisor with normal crossing singularities.
It is inspired by the construction of mixed Hodge structures on
the cohomology nonsingular quasiprojective varieties by Deligne \cite{Deligne}.

\subsection{The case of compactification by a smooth divisor}

Suppose that $D \subset M$ is a smooth divisor of a compact complex manifold $M$,
and $U = M - D$.
Denote by $\nu_{D/M}$ the normal bundle of $D$ in $M$.
We define
\be
Z_U(\tau, z)
= \chi(M, \Ell(T^*M; \tau, z))
- \chi(D, S(\nu^*_{D/M}) \otimes \Ell(T^*M|_D)).
\ee
The reason for making this definition is that when there is a $T^2$-action on
a compact 3-fold $M$ such that $D$ is also $T^2$-invariant,
then by considering the equivariant version we can get back the equivariant elliptic
genus used in preceding  sections.

\subsection{The general case}

Suppose that $D = D_1 \cup \cdots \cup D_N  \subset M$
is a divisor of normal crossing singularities  of a compact complex manifold $M$,
and $U = M - D$.
Denote by $\nu_{D/M}$ the normal bundle of $D$ in $M$.
For $I =\{i_1, \dots, i_m\} \subset \{1, \dots, n\}$, $|I| =m$,
set
\be
D_I = D_{i_1} \cap \cdots \cap D_{i_m}.
\ee
We define
\be
\begin{split}
Z_U(\tau, z)
= & \chi(M, \Ell(T^*M; \tau, z)) \\
+ & \sum_{m=1}^N (-1)^m \sum_{|I|=m}
\chi(D_I, S(\nu^*_{D_I/M}) \otimes \Ell(T^*M|_{D_I})).
\end{split}
\ee
We expect that with this definition,
$Z_U$ is independent of the choice of the compatification,
and furthermore,
when $c_1(M) = [D_1] + \cdots + [D_N]$,
then $Z_U$ is a weak Jacobi form.

\end{document}